\providecommand{\U}[1]{\protect\rule{.1in}{.1in}}
\newtheorem{theorem}{Theorem}
\newtheorem{acknowledgement}[theorem]{Acknowledgement}
\newtheorem{example}[theorem]{Example}
\newtheorem{lemma}[theorem]{Lemma}
\newtheorem{proposition}[theorem]{Proposition}
\newenvironment{proof}[1][Proof]{\noindent\textbf{#1.} }{\ \rule{0.5em}{0.5em}}
\begin{document}

\title{Recurrent Trajectories and Finite Critical Trajectories of Quadratic
Differentials on the Riemann Sphere}
\author{Faouzi Thabet\\University of Gabes, Tunisia}
\maketitle

\begin{abstract}
In this paper, the focus will be on both the existence and non-existence
respectively of finite critical trajectories and recurrent trajectories of a
quadratic differential on the Riemann sphere. We show the connection between
these two items. More precisely, we collect some criterions for the
non-existence of recurrent trajectories. The first criterion is in the same
vein of Jenkins Three-pole Theorem, while the second one is in relation with
the so-called Level function of quadratic differentials.

\end{abstract}

\bigskip\textit{2010 Mathematics subject classification: 30C15, 31A35, 34E05.}

\textit{Keywords and phrases: Quadratic differentials. Horizontal and vertical
trajectories. Recurrent trajectories. Geodesics.}

\section{\bigskip Introduction}

\bigskip One of the most frequent problems once investigating the finite
critical trajectories of a given quadratic differential on the Riemann sphere
$\widehat{\mathcal{%
\mathbb{C}
}}$ is the existence of the infamous recurrent trajectories. Jenkins
Three-pole Theorem paves the way in a particular case. In this paper, our main
emphasis is on the connection between the existence of these two kinds of
trajectories, which appears to be permanent. More precisely, we gather two
criterions for the non-existence of recurrent trajectories. The first
criterion is based on a proof of Jenkins Three-pole Theorem. The second one is
derived from the so-called Level function of quadratic differentials as
determined by Y.Baryshnikov and B.Shapiro.

A quadratic differential on the Riemann sphere $\widehat{\mathcal{%
\mathbb{C}
}}$ is a $2$-form $\varpi_{\varphi}\left(  z\right)  =\varphi\left(  z\right)
dz^{2}$ where $\varphi$ is a rational function on $%
\mathbb{C}
.$ In a first plan, we give some immediate and brief observations from the
theory of quadratic differentials. For more details, we refer the reader to
\cite{Strebel},\cite{jenkins}...

\emph{Critical points }of $\varpi_{\varphi}$ are its zero's and poles in
$\widehat{\mathcal{%
\mathbb{C}
}}.$ Zeros and simple poles are called \emph{finite critical points, }while
poles of order $2$ or greater are called \emph{infinite critical points. }All
other points of $\widehat{\mathcal{%
\mathbb{C}
}}$ are called \emph{regular points}.

\emph{Horizontal trajectories} (or just trajectories) of the quadratic
differential $\varpi_{\varphi}$ are the zero loci of the equation%
\begin{equation}
\mathcal{\Im}\int^{z}\sqrt{\varphi\left(  t\right)  }\,dt=\text{\emph{const}},
\label{eq traj}%
\end{equation}
or equivalently%
\[
\varpi_{\varphi}=\varphi\left(  z\right)  dz^{2}>0.
\]

The \emph{vertical} (or, \emph{orthogonal}) trajectories are obtained by
replacing $\Im$ by $\Re$ in equation (\ref{eq traj}). Horizontal and vertical
trajectories of the quadratic differential $\varpi_{\varphi}$ produce two
pairwise orthogonal foliations of the Riemann sphere $\widehat{\mathcal{%
\mathbb{C}
}}$.

A trajectory passing through a critical point of $\varpi_{\varphi}$ is called
\emph{critical trajectory}. In particular, if it starts and ends at a finite
critical point, it is called \emph{finite critical trajectory }or\emph{\ short
trajectory}, otherwise, we call it an \emph{infinite critical trajectory}. The
closure of the set of finite and infinite critical trajectories, that we
denote by $\Gamma_{\varphi},$ is called the \emph{critical graph}.

The local and global structure of the trajectories are studied in
\cite[Theorem3.5]{jenkins},\cite{Strebel}. In the large, any trajectory is
either is a closed curve containing no critical points, or, is an arc
connecting two critical points, or, is an arc that has no limit along at least
one of its rays. The structure of the set $\widehat{%
\mathbb{C}
}\setminus\Gamma_{\varphi}$ depends on the local and global behaviors of
trajectories. It consists of a finite number of domains called the
\emph{domain configurations} of $\varpi_{\varphi}.$ The following Theorem
gives the possible structures of domain configurations (see \cite[Theorem3.5]%
{jenkins},\cite{Strebel}):

\begin{theorem}
\label{jenkins th}\bigskip There are five kinds of domain configurations, :

\begin{itemize}
\item \emph{Half-plane} \emph{domain}: It is swept by trajectories converging
to a pole of order 3 in its two ends, and along consecutive critical
directions. Its conformally mapped to a half plane.

\item \emph{Strip domain}: It is swept by trajectories which both ends tend to
poles of order 2. Its conformally mapped to a strip domain.

\item \emph{Ring domain}: It is swept by closed trajectories. Its conformally
mapped to an annulus.

\item \emph{Circle domain} : It is swept by closed trajectories and contains
exactly one double pole. Its conformally mapped to the a circle.

\item \emph{Dense domain : }It is swept by recurrent critical trajectory,
i.e., the interior of its closure is non-empty.
\end{itemize}
\end{theorem}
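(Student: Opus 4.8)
The plan is to prove this as the classical global structure theorem, building up from the local behaviour of trajectories via the \emph{natural parameter}. First I would fix a simply connected neighbourhood $U$ of a regular point $z_{0}$ and introduce the distinguished local coordinate
\[
w(z)=\int_{z_{0}}^{z}\sqrt{\varphi(t)}\,dt,
\]
which is conformal on $U$ and transforms $\varpi_{\varphi}$ into $dw^{2}$; in this coordinate the horizontal trajectories become the horizontal segments $\Im w=\text{const}$ and the vertical trajectories the orthogonal segments $\Re w=\text{const}$. This reduces the picture at every regular point to a family of parallel lines, so the only interesting behaviour is concentrated at the critical points.

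The second step is to record the local normal forms at each critical point, obtained by integrating $\sqrt{\varphi}$ near a zero or a pole. Near a zero of order $n$ or a simple pole one finds $n+2$ trajectory rays leaving the point at equal angles; near a pole of order $2$ the trajectories are, according to the argument of the leading coefficient, either all closed, all radial, or logarithmic spirals; and near a pole of order $m\geq 3$ a punctured neighbourhood splits into $m-2$ sectors, each swept by trajectories converging to the pole along one of the $m-2$ critical directions and locally modelled on a half-plane. These are the only local models precisely because $\varphi$ is rational and therefore has finitely many critical points.

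The third and central step is the global decomposition. I would take a component $D$ of $\widehat{\mathbb{C}}\setminus\Gamma_{\varphi}$; by construction $D$ contains no finite critical point and is swept by a family of mutually non-crossing non-critical trajectories. Each such trajectory, continued in both directions, must, by compactness of $\widehat{\mathbb{C}}$ together with the local models above, do exactly one of three things: close up into a Jordan curve, tend to an infinite critical point at each of its two ends, or be recurrent. Analysing the family of trajectories filling $D$ then forces $D$ into one of the five models: all trajectories closed gives a \emph{ring domain} (conformally an annulus), or a \emph{circle domain} (conformally a disc) when $D$ contains a double pole around which the closed trajectories shrink; trajectories running between singularities give a \emph{strip domain} when both ends are double poles and a \emph{half-plane domain} when the common end is a third-order pole approached along consecutive critical directions; and the recurrent case yields a \emph{dense domain}, whose closure has nonempty interior. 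Finiteness of the number of domains follows from the finiteness of $\Gamma_{\varphi}$.

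The main obstacle will be the trichotomy in the third step, namely proving that a non-critical trajectory that is neither closed nor convergent to a singularity must be recurrent, and that recurrence forces the closure of $D$ to have nonempty interior. This is the Poincar\'e--Bendixson-type heart of the argument: it requires showing that the $\omega$-limit set of such a trajectory, which can contain no finite critical point and cannot escape through the boundary, is itself a full trajectory region, so that the trajectory accumulates on an open set. I would handle this by passing to the natural parameter along the trajectory and studying the first-return map to a transversal vertical segment, ruling out spiralling onto a single closed trajectory by exploiting the orthogonality of the two foliations.
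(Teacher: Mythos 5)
The first thing to note is that the paper contains no proof of this theorem: it is stated as a quotation of the classical structure theorem, with the proof explicitly delegated to the cited references (Jenkins, Theorem 3.5, and Strebel's book). So there is no proof in the paper to compare yours against; the relevant benchmark is the classical argument in those sources, and your plan follows exactly that route --- natural parameter at regular points, local normal forms at the critical points, then a global analysis of the components of $\widehat{\mathbb{C}}\setminus\Gamma_{\varphi}$.

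Measured as a proof rather than a plan, however, there are genuine gaps, all concentrated at the point you yourself flag as the main obstacle. First, your trichotomy (closed, both rays tending to infinite critical points, or recurrent) omits a case that must be excluded: a ray could a priori accumulate on a one-dimensional set that is not a single closed trajectory, namely a cycle made of finite critical trajectories and finite critical points --- the analogue of a polycycle in Poincar\'e--Bendixson theory. The tool that kills both this and the limit-cycle case is the transverse invariant measure of the horizontal foliation: in the natural parameter $w=\int\sqrt{\varphi}\,dz$, a vertical transversal is parametrized by $\Im w$, and the first-return map along horizontal leaves is a piecewise isometry of that parameter (an interval exchange, possibly with flips); an isometry cannot have successive crossings accumulating monotonically at a point, since consecutive gaps are carried isometrically onto one another and so cannot shrink, which rules out every one-dimensional limit set, and the same rigidity shows the closure of a recurrent ray contains a full $w$-rectangle, hence has nonempty interior. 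Your sketch gestures at this (``first-return map'', ``orthogonality'') but never isolates the measure-preservation property, which is the actual content of the step. Second, you assert without argument that all trajectories in one component of $\widehat{\mathbb{C}}\setminus\Gamma_{\varphi}$ are of the same type, and that the component is then conformally an annulus, disc, strip, or half-plane; this requires showing, for instance, that the union of freely homotopic closed trajectories is open and closed in the component, and that $w$ extends to a single-valued conformal map of the component onto the model domain. These steps occupy an entire chapter of Strebel's book; your outline points at them correctly but leaves them unproven.
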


As it is mentioned in the title, we are interested in the fifth kind of domain
configuration. The dense domain is either the whole complex plane, or, is
bordered by finite critical trajectories. Figure \ref{FIG1} illustrates two
quadratics differentials with recurrent trajectories. In the first one, the
whole complex plane is a set domain, while in the second, the dense domain is
bounded by a closed finite critical trajectory. See \cite{kaplan},\cite{martz
rakh}.

\begin{figure}[tbh]
\begin{minipage}[b]{0.42\linewidth}
\centering\includegraphics[scale=0.4]{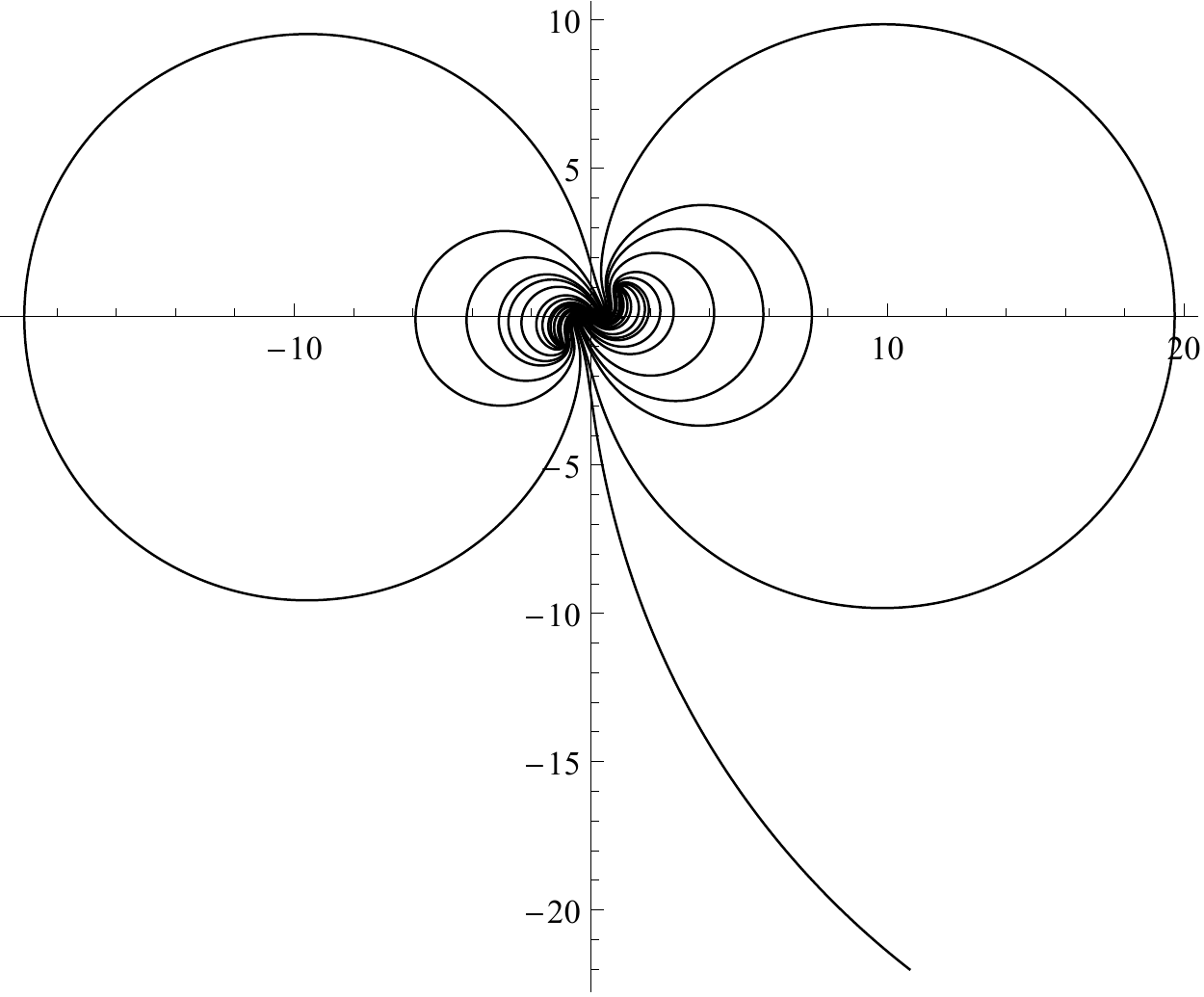}
\end{minipage}\hfill
\begin{minipage}[b]{0.32\linewidth} \includegraphics[scale=0.3]{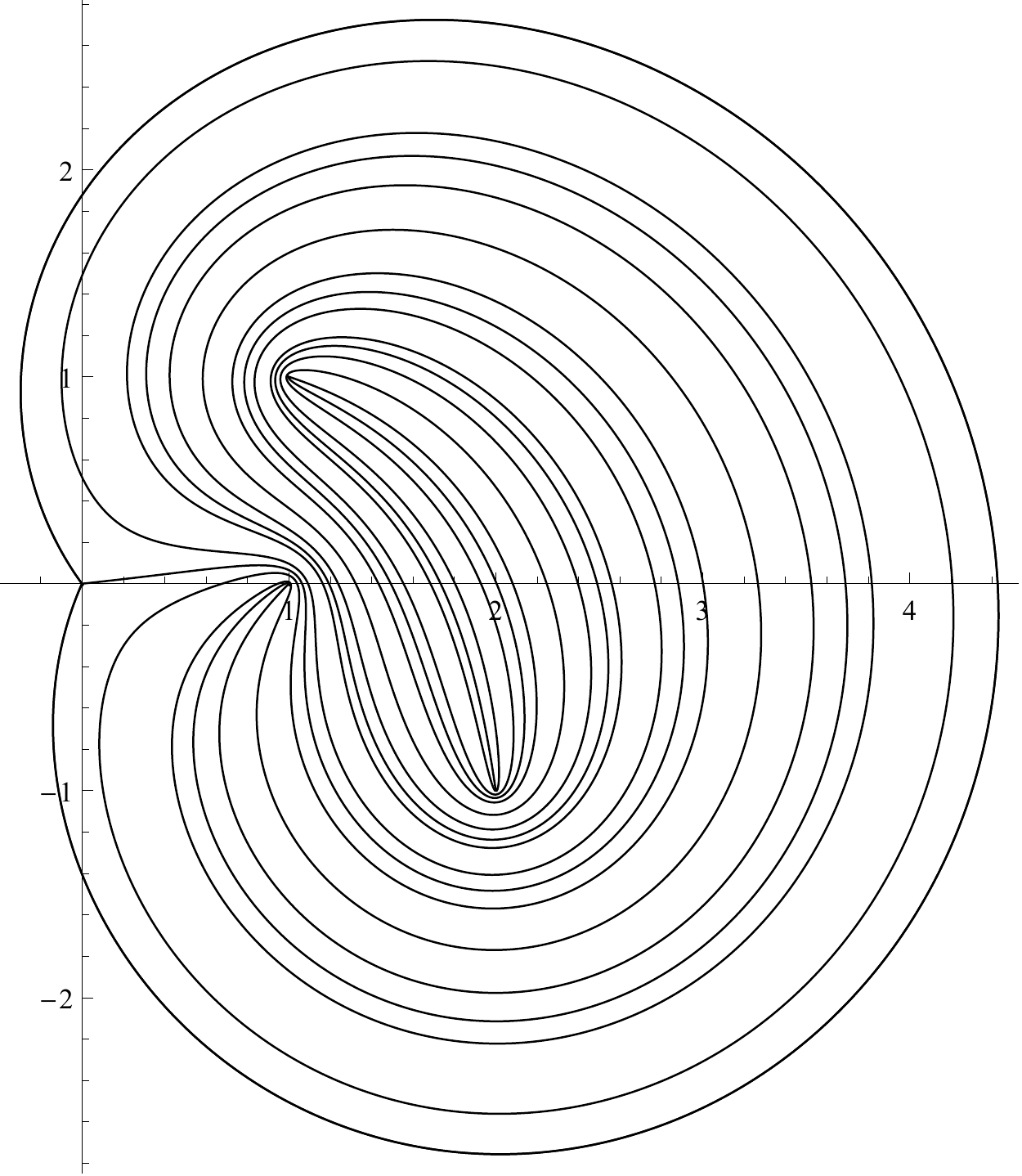}
\end{minipage}
\hfill\caption{Recurrent trajectory through the regular point $z=1$ of the
quadratic differential $-dz^{2}/\left(  z-0.5\right)  \left(  z+0.5\right)
\left(  z-1-i\right)  \left(  z+1+i\right)  ,$ (left). $-zdz^{2}/\left(
z-0.5\right)  \left(  z-1-i\right)  \left(  z-2+i\right)  $ (right)}%
\label{FIG1}%
\end{figure}

The quadratic differential $\varpi_{\varphi}$ defines a $\varphi$-metric on
the Riemann sphere with the differential element $\sqrt{\left\vert
\varphi\left(  z\right)  \right\vert }\left\vert dz\right\vert $. If $\gamma$
is a rectifiable arc in $\widehat{\mathcal{%
\mathbb{C}
}}$, then its $\varphi$-length is defined by%
\[
\left\vert \gamma\right\vert _{\varphi}=\int_{\gamma}\sqrt{\left\vert
\varphi\left(  z\right)  \right\vert }\left\vert dz\right\vert .
\]

A trajectory of $\varpi_{\varphi}$ is finite if, and only if its $\varphi
$-length is finite, otherwise it is infinite.

\bigskip A domain in $\widehat{\mathcal{%
\mathbb{C}
}}$ bounded only by segments of horizontal and/or vertical trajectories of
$\varpi_{\varphi}$ (and their endpoints) is called $\varpi_{\varphi}$-polygon.
A usefull tool in the investigation of the critical graph of a quadratic
differential is :

\begin{lemma}
[Teichm\H{u}ller]\label{teich lemma} Let $\Omega$ be a $\varpi_{\varphi}%
$-polygon, and let $z_{j}$ be the critical points on the boundary
$\partial\Omega$ of $\Omega,$ and let $t_{j}$ be the corresponding interior
angles with vertices at $z_{j},$ respectively. Then%
\begin{equation}
\sum_{j}\left(  1-\dfrac{\left(  n_{j}+2\right)  t_{j}}{2\pi}\right)
=2+\sum_{i}m_{i}, \label{Teich equality}%
\end{equation}
where $n_{j}$ are the multiplicities of $z_{j}=1,$ and $m_{i}$ are the
multiplicities of critical points inside $\Omega.$
\end{lemma}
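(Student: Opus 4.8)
The plan is to read equation (\ref{Teich equality}) as a Gauss--Bonnet identity for the flat conformal metric $\sqrt{\left\vert \varphi\right\vert }\left\vert dz\right\vert$ attached to $\varpi_{\varphi}$. First I would pass to the natural parameter $w=\int^{z}\sqrt{\varphi(t)}\,dt$, in which the metric becomes the Euclidean $\left\vert dw\right\vert$ and the horizontal (resp. vertical) trajectories become the straight lines $\Im w=\mathrm{const}$ (resp. $\Re w=\mathrm{const}$). In particular every side of $\partial\Omega$ is a geodesic of this metric, so its geodesic curvature vanishes, and the metric is flat (zero Gaussian curvature) at every regular point of $\Omega$.

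Next I would localize the curvature at the critical points. Near a critical point $z_{0}$ of multiplicity $n$ one has $\varphi(z)\sim c\,(z-z_{0})^{n}$, hence $w\sim \tfrac{2\sqrt{c}}{n+2}(z-z_{0})^{(n+2)/2}$; thus $z\mapsto w$ behaves like $w=\zeta^{(n+2)/2}$, and the metric acquires a conical singularity of total angle $(n+2)\pi$ at $z_{0}$. The concentrated (Dirac) curvature carried by such a cone point equals $2\pi-(n+2)\pi=-n\pi$. The same local model shows that a Euclidean angle $t_{j}$ measured at $z_{j}$ in the $z$-plane is dilated to the flat angle $\tfrac12(n_{j}+2)\,t_{j}$ seen by the metric, so the interior angle of $\Omega$ at the vertex $z_{j}$ is $\tfrac12(n_{j}+2)t_{j}$ and the corresponding exterior angle is $\pi-\tfrac12(n_{j}+2)t_{j}$.

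Then I would apply the Gauss--Bonnet theorem to $\Omega$, which I take to be a topological disk ($\chi(\Omega)=1$), with piecewise-geodesic boundary, concentrated interior curvature $-m_{i}\pi$ at each interior critical point, and the exterior angles just computed:
\[
\sum_{i}(-m_{i}\pi)+\sum_{j}\Big(\pi-\tfrac12(n_{j}+2)t_{j}\Big)=2\pi\,\chi(\Omega)=2\pi .
\]
Dividing by $\pi$ and rearranging yields exactly
\[
\sum_{j}\Big(1-\dfrac{(n_{j}+2)t_{j}}{2\pi}\Big)=2+\sum_{i}m_{i},
\]
which is (\ref{Teich equality}).

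I expect the main obstacle to be the rigorous justification of Gauss--Bonnet in the presence of the conical singularities (equivalently, the computation of the total turning of $\partial\Omega$ through the argument principle applied to $\sqrt{\varphi}$), together with confirming that $\Omega$ is simply connected so that $\chi(\Omega)=1$; the local angle bookkeeping at simple poles ($n_{j}=-1$, cone angle $\pi$) and at possible critical points lying on the boundary but not at a genuine vertex must also be checked, to be sure each contributes the correct term. A clean way to bypass the measure-theoretic subtlety of concentrated curvature is to excise a small metric disk around every singular point, apply the smooth Gauss--Bonnet formula on the resulting flat region, and let the radii tend to zero; the boundary integrals around the excised cone points then produce precisely the contributions $-n_{j}\pi$ and the angle factors recorded above.
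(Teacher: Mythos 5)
The paper itself gives no proof of this lemma: it is quoted as a classical tool, and the proof in the references it cites (Strebel, Theorem~14.1; Jenkins) runs through the argument principle, computing the variation of $\arg\varphi$ around $\partial\Omega$ (which equals $2\pi\sum_{i}m_{i}$) and translating it into the turning of the tangent via the relation $\arg\bigl(\varphi(z)\,dz^{2}\bigr)\in\{0,\pi\}$ along horizontal and vertical sides. Your Gauss--Bonnet argument is a correct alternative route, and all the constants check out: cone angle $(n+2)\pi$, concentrated curvature $-n\pi$, $\varphi$-metric interior angle $\tfrac{1}{2}(n_{j}+2)t_{j}$, geodesic sides, $\chi(\Omega)=1$, and division by $\pi$ give exactly (\ref{Teich equality}). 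In substance the two arguments are the same identity in different dress---your total turning along the sides equals $-\tfrac{1}{2}$ times the variation of $\arg\varphi$ there---but the bookkeeping differs where it matters: the argument principle is insensitive to the fact that at a pole of order at least $2$ (whether inside $\Omega$ or at a vertex) the metric picture degenerates, since there the $\varphi$-metric is complete, the pole lies at infinite distance, and a ``cone of angle $(m+2)\pi\leq 0$'' has no direct geometric meaning. So for those critical points your Dirac-curvature accounting genuinely requires the excision-and-limit argument you sketch at the end; fortunately it does work, because the total turning of a small loop around a critical point of order $m$ in the flat metric equals $(m+2)\pi$ for every integer $m$, zeros and poles alike, as one sees from the local model $w\sim z^{(m+2)/2}$. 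What your route buys in exchange is geometric transparency (flatness, geodesic sides, conical singularities) for zeros and simple poles, which is where the lemma is applied in this paper. Finally, do state explicitly, as you flag, that $\Omega$ is a Jordan domain: the paper's definition of a $\varpi_{\varphi}$-polygon does not literally require simple connectivity, and the constant $2$ on the right-hand side of (\ref{Teich equality}) is really $2\chi(\Omega)$.
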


\section{Recurrent trajectories}

Suppose that the quadratic differential $\varpi_{\varphi}$ has an infinite
critical point $p.$ From the local behavior of the trajectories, there exists
a neighborhood $\mathcal{U}$ of $p,$ such that, any trajectory in
$\mathcal{U}$, either is a closed curve encircling $p,$ if $p$ is a double
pole with negative residue, or, it diverges to $p$ following a certain
direction depending on the order of $p.$ In other word, there is absence of
recurrent trajectory in $\mathcal{U}.$ If $\varpi_{\varphi}$ has a recurrent
trajectory, then its respective dense domain cannot be the whole complex
plane, and then, $\varpi_{\varphi}$ should have finite critical trajectories
(Theorem \ref{jenkins th}). These facts show the following :

\begin{proposition}
\bigskip Assume that the quadratic differential $\varpi_{\varphi}$ satisfies :

\begin{enumerate}
\item[(i)] it has at least an infinite critical point,

\item[(ii)] it has no finite critical trajectory.
\end{enumerate}

Then, $\varpi_{\varphi}$ has no recurrent trajectory.
\end{proposition}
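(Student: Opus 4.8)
The plan is to argue by contradiction, combining the global classification of domain configurations in Theorem \ref{jenkins th} with the purely local description of trajectories near an infinite critical point. Suppose, contrary to the conclusion, that $\varpi_\varphi$ possesses a recurrent trajectory $\gamma$. By Theorem \ref{jenkins th}, such a $\gamma$ can only occur inside a \emph{dense domain} $D$, that is, a domain swept by recurrent trajectories whose closure has non-empty interior, and in which the recurrent trajectories are dense.

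First I would exploit hypothesis (i). Let $p$ be an infinite critical point, i.e.\ a pole of $\varphi$ of order at least $2$. By the local structure of trajectories (\cite[Theorem 3.5]{jenkins}, \cite{Strebel}), $p$ admits a punctured neighborhood $\mathcal{U}$ in which every trajectory is either a closed loop encircling $p$ (when $p$ is a double pole with negative residue) or escapes $\mathcal{U}$ converging to $p$ along one of the finitely many critical directions. In neither case is such a trajectory recurrent, so no recurrent trajectory meets $\mathcal{U}$; equivalently $\mathcal{U}\cap D=\varnothing$. Since the recurrent trajectories are dense in $\overline{D}$, this forces $p\notin\overline{D}$. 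Repeating the argument for every infinite critical point shows that $\overline{D}$ avoids a neighborhood of each of them, and in particular $D\neq\widehat{\mathbb{C}}$.

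Next I would read off the boundary of $D$. Being part of the critical graph $\Gamma_\varphi$, the boundary $\partial D$ is a union of critical trajectory arcs; by the previous step none of these arcs approaches an infinite critical point, and, lying on $\partial D$ rather than in the open dense domain, none of them is recurrent. By the global description of trajectories recalled in the Introduction, an arc that is neither recurrent nor escaping to an infinite critical point must connect two critical points along both of its rays, and since neither endpoint can be an infinite critical point, both are finite; thus each such arc is a finite (short) critical trajectory. Hence $D$ is bordered by finite critical trajectories, contradicting hypothesis (ii). This contradiction establishes the proposition.

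The step I expect to be delicate is the implication $\mathcal{U}\cap D=\varnothing \Rightarrow p\notin\overline{D}$ together with the subsequent identification of $\partial D$ with finite critical trajectories. One must use carefully that the recurrent trajectories are dense in the \emph{closure} of the dense domain, so that an infinite critical point cannot be a boundary accumulation point, and one must apply the classification of a non-recurrent, non-escaping critical arc to each boundary ray separately. Everything else is a direct assembly of the cited local and global structure theorems.
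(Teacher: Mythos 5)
Your argument is correct and follows essentially the same route as the paper's: the local trajectory structure near the infinite critical point excludes recurrent trajectories from a neighborhood of it, so the dense domain cannot be the whole sphere, and therefore (by the dichotomy for dense domains) it must be bordered by finite critical trajectories, contradicting hypothesis (ii). The only difference is that you spell out in detail the identification of $\partial D$ with finite critical trajectories, a fact the paper simply states (with references) after Theorem \ref{jenkins th} rather than reproves.
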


In general, the non-existence of recurrent trajectories is not guaranteed. The
most well-known result is the so-called Jenkins Three-pole Theorem, which
asserts that

\begin{theorem}
[Jenkins Three-pole Theorem]A quadratic differential on the Riemann sphere
with at most three distinct poles cannot have recurrent trajectories.
\end{theorem}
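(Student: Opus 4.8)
The plan is to argue by contradiction: suppose $\varpi_{\varphi}$ has a recurrent trajectory $\gamma$ and show that this forces at least four distinct poles. By Theorem \ref{jenkins th}, $\gamma$ is dense in a dense domain $D$ whose closure has non-empty interior. The first step is a purely numerical one. Since $\varphi$ is rational, $\varpi_{\varphi}$ is a meromorphic quadratic differential on $\widehat{\mathbb{C}}$, so the total order of its zeros minus the total order of its poles equals $-4$. Writing $Z\ge 0$ for the total zero order and $a_{1},\dots ,a_{k}$ ($k\le 3$) for the orders of the distinct poles, this reads $\sum_{\ell}a_{\ell}=Z+4$. As each $a_{\ell}\ge 1$ and $k\le 3$, one cannot have $a_{1}=\dots =a_{k}=1$ (that would give $\sum_{\ell}a_{\ell}\le 3<4$); hence at least one pole has order $\ge 2$, i.e. $\varpi_{\varphi}$ always possesses an infinite critical point. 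This is exactly hypothesis (i) of the Proposition above, but hypothesis (ii) may fail, so the substance of the theorem is the case in which finite critical trajectories are present.

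The second step is to pin down the structure of $D$. Near a pole of order $\ge 2$ the trajectories are non-recurrent (closed around the pole, or diverging to it along fixed critical directions), so every infinite critical point lies in a circle, half-plane or strip domain and therefore cannot belong to the interior of $D$; and since a recurrent trajectory never reaches a critical point, the interior of $D$ contains no critical points at all. Consequently $\partial D$ is a non-empty union of finite critical trajectories joining finite critical points (zeros and simple poles), and $D$ is a $\varpi_{\varphi}$-polygon whose interior is free of critical points. The plan is then to feed this configuration into the Teichm\"uller Lemma \ref{teich lemma}. At each finite critical point $z_{j}$ of multiplicity $n_{j}$ on $\partial D$, the bounding trajectory arcs meet at an interior angle $t_{j}$ that is an integer multiple of the elementary sector $2\pi/(n_{j}+2)$; for a zero this makes $(n_{j}+2)t_{j}/2\pi$ a positive integer, so the corresponding summand $1-(n_{j}+2)t_{j}/2\pi$ in (\ref{Teich equality}) is a non-positive integer. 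Since the interior of $D$ carries no critical points, the right-hand side $2+\sum_{i}m_{i}$ of (\ref{Teich equality}) is governed entirely by the topology of $D$, and balancing it against a sum of non-positive terms forces $D$ to be multiply connected with several boundary separatrices — hence to use up a controlled number of finite critical points.

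The final step is the bookkeeping that turns these local constraints into a lower bound on $k$. Substituting $Z=\sum_{\ell}a_{\ell}-4$ from the degree relation, I would count how the finite critical points available (zeros, of total order $Z$, together with the simple poles among the $a_{\ell}$) can be distributed along $\partial D$, while simultaneously tracking how the complementary half-plane, strip, ring and circle domains must absorb the infinite critical points forced by Step one; comparing these counts with (\ref{Teich equality}) applied to $D$ and to each complementary $\varpi_{\varphi}$-polygon should show that no configuration with $k\le 3$ is admissible, i.e. at least four distinct poles are needed to sustain the boundary complexity of a dense domain. I expect the main obstacle to be exactly this combinatorial/Euler-characteristic accounting: making the Teichm\"uller balance rigorous when $D$ fails to be simply connected and when several boundary separatrices are attached to the same few critical points, and then verifying that every remaining admissible configuration with at most three poles genuinely contradicts (\ref{Teich equality}).
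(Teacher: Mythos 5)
Your Step 1 (the degree count forcing a pole of order $\geq 2$) is correct, but Step 2 rests on a false structural claim, and Step 3 is an outline rather than an argument. The assertion that ``since a recurrent trajectory never reaches a critical point, the interior of $D$ contains no critical points at all'' is a non sequitur and is in fact false: the recurrent trajectory avoids critical points, but it is only \emph{dense} in $D$, and finite critical points (zeros and simple poles) can perfectly well lie in $D\setminus\gamma$. The paper's own example on the left of Figure \ref{FIG1} exhibits a dense domain equal to the whole plane whose interior contains four simple poles; only \emph{infinite} critical points are excluded from the closure of a recurrent trajectory. Once this premise fails, $D$ need not be a critical-point-free $\varpi_{\varphi}$-polygon, the right-hand side of (\ref{Teich equality}) is no longer ``governed entirely by the topology of $D$,'' and the planned balance carries no information. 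Two further problems would remain even if the premise held: Lemma \ref{teich lemma} is stated for $\varpi_{\varphi}$-polygons, whereas your $D$ is expected to be multiply connected, and you do not explain how to apply it there; and the decisive Step 3 --- the combinatorial accounting supposed to exclude $k\leq 3$ --- is explicitly deferred (``I would count\dots'', ``should show\dots''), so the contradiction is never actually produced.

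The paper (following Strebel's proof of Theorem 15.2, reproduced in the proof of Proposition \ref{th princ}) avoids all of this by never analyzing the dense domain globally. It exploits the recurrence itself: a ray $\gamma(t)$ of the recurrent trajectory crosses a fixed orthogonal arc $\gamma^{\perp}$ infinitely often, and the first few returns cut out \emph{four pairwise disjoint} $\varpi_{\varphi}$-polygons whose sides are sub-arcs of $\gamma$ and of $\gamma^{\perp}$ and whose corners are regular points with interior angles $\pi/2$, $\pi$, or $3\pi/2$. For each such polygon, equation (\ref{Teich equality}) reads $1=2+\sum_{i}m_{i}$, i.e.\ $\sum_{i}m_{i}=-1$; since zeros contribute positively, each of the four disjoint domains must contain a pole (indeed a critical point of odd multiplicity, which is exactly how Proposition \ref{th princ} refines the statement). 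This yields at least four distinct poles, the desired contradiction, without ever needing to know where the critical points sit relative to the dense domain --- precisely the point at which your approach breaks down. To salvage your plan you would need the genuine structure theory of spiral domains (boundary made of critical trajectories, with finite critical points allowed inside), which is substantially harder than the transversal-arc argument.
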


Two different proofs of this theorem can be found in \cite{jenkins} and
\cite[Theorem 15.2]{Strebel}. But the proof given in the second reference
seems to get a more precise result. What comes next is a refinement of the
mentioned Theorem :

\begin{proposition}
\bigskip\label{th princ}No recurrent trajectories of a quadratic differential
on the Riemann sphere with at most three critical points with odd multiplicities.
\end{proposition}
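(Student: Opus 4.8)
The plan is to mimic the structure of Strebel's proof of the classical Three-pole Theorem, but to replace the counting of \emph{poles} by a counting of \emph{critical points of odd multiplicity}, since these are precisely the points at which a recurrent trajectory's closure can fail to be a manifold-like region. First I would argue by contradiction: suppose $\varpi_{\varphi}$ has a recurrent trajectory $\gamma$. By Theorem \ref{jenkins th}, its closure $\overline{\gamma}$ contains a dense domain $\mathcal{D}$, and by the Proposition preceding this statement together with the discussion in Section 2, this dense domain cannot be the whole sphere; hence $\mathcal{D}$ is bounded by finite critical trajectories. The boundary $\partial\mathcal{D}$ is therefore a nonempty union of segments of horizontal trajectories joining finite critical points, so it constitutes (the boundary of) a $\varpi_{\varphi}$-polygon.

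Next I would apply the Teichm\"uller Lemma \ref{teich lemma} to the domain $\mathcal{D}$ (or to a suitable $\varpi_{\varphi}$-polygon obtained from its boundary). The key observation is that a recurrent trajectory, being everywhere dense in $\mathcal{D}$, forces the interior angles $t_{j}$ at the boundary critical points to take very restricted values — in fact each boundary vertex is a finite critical point where several trajectory arcs meet, and the density of $\gamma$ near $\partial\mathcal{D}$ means no boundary arc can terminate "freely." The idea is that only finite critical points of \emph{odd} multiplicity can serve as genuine corners of $\partial\mathcal{D}$ where the boundary turns, because at a point of even multiplicity (in particular a regular point or an even-order zero) the two incident separatrices line up to continue the trajectory smoothly, so such a point cannot bound the dense domain. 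This is exactly where the hypothesis "at most three critical points with odd multiplicities" enters: the number of admissible corners of the $\varpi_{\varphi}$-polygon bounding $\mathcal{D}$ is at most three.

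I would then plug this bound into the Teichm\"uller identity \eqref{Teich equality}. Writing the identity for $\mathcal{D}$, the left-hand side is a sum over the boundary critical points of terms $1-\tfrac{(n_{j}+2)t_{j}}{2\pi}$; since each such term is strictly less than $1$ and there are at most three contributing vertices, the left-hand side is strictly less than $3$, while the right-hand side is $2+\sum_{i}m_{i}\ge 2$. The plan is to show that the presence of a dense domain forces enough interior critical mass (or enough positive contribution on the right) that the inequality
\[
\sum_{j}\left(1-\frac{(n_{j}+2)t_{j}}{2\pi}\right) = 2 + \sum_{i} m_{i}
\]
becomes impossible under the three-odd-point constraint, yielding the contradiction. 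More precisely, one checks that a dense domain contributes at least a fixed amount to the right-hand side while the odd-multiplicity restriction caps the left-hand side, and the two cannot be reconciled.

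The hard part will be the second step: making rigorous the claim that only odd-multiplicity critical points can appear as effective corners of the polygon bounding a dense domain, and controlling exactly how the interior angles $t_{j}$ behave when infinitely many arcs of the recurrent trajectory accumulate on $\partial\mathcal{D}$. In particular one must be careful that the Teichm\"uller Lemma is stated for an honest $\varpi_{\varphi}$-polygon, whereas $\mathcal{D}$ may have a boundary along which the recurrent trajectory accumulates densely rather than consisting of finitely many smooth arcs; I would need to either pass to an approximating polygon or invoke the finiteness of the domain configuration (from Theorem \ref{jenkins th}) to guarantee that $\partial\mathcal{D}$ is built from finitely many finite critical trajectories. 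Reconciling the combinatorics of these boundary arcs with the parity hypothesis is the crux on which the whole argument turns.
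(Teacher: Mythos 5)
There is a genuine gap, and in fact the approach as outlined cannot work. Your central claim --- that only critical points of \emph{odd} multiplicity can serve as corners of the boundary of the dense domain $\mathcal{D}$ --- is unjustified and false: at a zero of even multiplicity $n$, the $n+2$ separatrices meet at angles $2\pi/(n+2)$, and nothing prevents such a point from being a genuine corner of $\partial\mathcal{D}$; parity of $n_j$ plays no role in the local geometry. Worse, Teichm\"uller's Lemma \ref{teich lemma} holds for (simply connected) $\varpi_{\varphi}$-polygons, whereas a dense domain is generally multiply connected, or even the whole sphere minus the critical graph. If one could apply the identity (\ref{Teich equality}) to $\mathcal{D}$ itself, the argument would prove far too much: every corner of a polygon bounded by horizontal trajectories has interior angle an integer multiple of $2\pi/(n_j+2)$, so every boundary term $1-\tfrac{(n_j+2)t_j}{2\pi}=1-k_j$ is $\leq 0$, making the left side $\leq 0 < 2$ with no hypothesis on the number of odd points at all --- i.e., it would ``prove'' that no quadratic differential has a dense domain, contradicting the examples in Figure \ref{FIG1}. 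Finally, even granting your (false) three-corner bound, the inequality you reach (left side $<3$, right side $\geq 2$) is not a contradiction; the ``fixed amount'' you hope the dense domain contributes to the right-hand side is exactly the content that is never supplied.

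The paper's proof avoids $\partial\mathcal{D}$ entirely and the parity enters in the opposite place. Following Strebel, one takes a ray $\gamma(t)$ of the recurrent trajectory and a short arc $\gamma^{\perp}$ of the orthogonal trajectory through a regular point of it; successive returns of $\gamma$ to $\gamma^{\perp}$ carve out four pairwise disjoint $\varpi_{\varphi}$-polygons $\Omega_1,\Omega_2,\digamma_1,\digamma_2$ whose sides are subarcs of $\gamma$ and $\gamma^{\perp}$ and whose corners are \emph{regular} points, with angles $\pi/2$, $\pi$, or $3\pi/2$. For each polygon the left side of (\ref{Teich equality}) then equals exactly $1$, so $\sum_i m_i=-1$: the sum of the multiplicities of the critical points \emph{inside} is odd, hence each of the four disjoint domains contains at least one critical point of odd multiplicity, contradicting the hypothesis that there are at most three. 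Note that the oddness is extracted from the interior sum $\sum_i m_i$, not from any restriction on which critical points may sit on a boundary --- that inversion is what your outline is missing, and it is the crux of the theorem.
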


\begin{proof}
\bigskip We follow the idea of the proof of \cite[Theorem 15.2]{Strebel}. Let
$\gamma\left(  t\right)  ,t\in\left[  0,1\right[  ,$ be a ray of a recurrent
trajectory of a quadratic differential $\varpi_{\varphi}$ on the Riemann
sphere with at most three critical points with odd multiplicities. Set
$t_{0}\in\left[  0,1\right[  $ such that $\gamma\left(  t\right)  ,t\geq
t_{0}$ does not pass through any critical point. Let $\gamma^{\perp}$ be an
interval of the orthogonal trajectory starting at the point $z_{0}%
=\gamma\left(  t_{0}\right)  .$ By hypothesis, $\gamma\left(  t\right)  ,t\geq
t_{0}$ cuts $\gamma^{\perp}$ infinitely many times. Let $z_{1}=\gamma\left(
t_{1}\right)  ,t_{1}>t_{0}$ be the first intersection between $\gamma\left(
t\right)  $ and $\gamma^{\perp}.$ We consider the Jordan curve $\delta$
composed by the two sub-arcs $\gamma_{0}$ and $\gamma_{0}^{\perp}$
respectively of $\gamma\left(  t\right)  ,t\geq t_{0}$ and $\gamma^{\perp}$
between the points $z_{0}$ and $z_{1},$ see Figure \ref{FIG2}. As we can take
the next intersection between $\gamma\left(  t\right)  ,t>t_{1}$ and
$\gamma^{\perp},$ we may assume that the interior of $\delta$ at $z_{1}$
equals $\frac{3\pi}{2}.$ Let $z_{2}=\gamma\left(  t_{2}\right)  ,t_{2}>t_{1}$
be the next intersection of $\gamma\left(  t\right)  ,t>t_{1}$ and
$\gamma^{\perp}.$ Clearly, $z_{2}\in\gamma_{0}^{\perp},$ the sub-arc
$\gamma\left(  \left[  t_{1},t_{2}\right]  \right)  $ splits a component
$\Omega$ of $\widehat{%
\mathbb{C}
}\setminus\delta$ into two sub-domains $\Omega_{1}$ and $\Omega_{2},$ with
\begin{align*}
\partial\Omega_{1}  &  =\gamma\left(  \left[  t_{0},t_{2}\right]  \right)
\cup\gamma_{1}^{\perp},\\
\partial\Omega_{2}  &  =\gamma\left(  \left[  t_{1},t_{2}\right]  \right)
\cup\gamma_{2}^{\perp},
\end{align*}
where $\gamma_{1}^{\perp}$ and $\gamma_{2}^{\perp}$ are the sub-arcs of
$\gamma_{0}^{\perp}$ joining $z_{2}$ respectively to $z_{0}$ and $z_{1}.$
\[%
\begin{tabular}
[c]{|l|l|l|}\hline
& vertices & respective interior angles\\\hline
$\partial\Omega_{1}$ & $z_{0},z_{1},z_{2}$ & $\pi/2,\pi,\pi/2$\\\hline
$\partial\Omega_{2}$ & $z_{1},z_{2}$ & $\pi/2,\pi/2$\\\hline
\end{tabular}
\ \ \ \ \ \ \ \ \ \ \
\]
$\ $Applying Lemma \ref{teich lemma} to the $\varpi_{\varphi}$-polygons
$\partial\Omega_{1}$ and $\partial\Omega_{2},$ we get%
\[
1=2+\sum m_{i}\text{ for }\partial\Omega_{i},i=1,2.
\]
We conclude that each of the domains $\Omega_{1}$ and $\Omega_{2}$ must
contain at least a critical point with odd multiplicity. Now, let
$z_{3}=\gamma\left(  t_{3}\right)  ,t_{3}>t_{2}$ be the next intersection of
$\gamma\left(  t\right)  ,t>t_{2}$ and $\gamma^{\perp}.$ It is obvious that
the union of $\gamma\left(  \left[  t_{2},t_{3}\right]  \right)  $ and the
sub-arc of $\gamma_{0}^{\perp}$ between $z_{2}$ and $z_{3}$ splits the
exterior of $\Omega$ into two sub-domains $\digamma_{1}$ and $\digamma_{2}.$%
\[%
\begin{tabular}
[c]{|l|l|l|}\hline
& vertices & respective interior angles\\\hline
$\partial\digamma_{1}$ & $z_{0},z_{1},z_{2},z_{3}$ & $3\pi/2,\pi/2,\pi
/2,\pi/2$\\\hline
$\partial\digamma_{2}$ & $z_{2},z_{3}$ & $\pi/2,\pi/2$\\\hline
\end{tabular}
\ \ \ \ \ \ \ \ \ \ \
\]
Once more, applying Lemma \ref{teich lemma} to the $\varpi_{\varphi}$-polygons
$\partial\digamma_{1}$ and $\partial\digamma_{2},$ we reach the same
conclusion that each of the domains $\digamma_{1}$ and $\digamma_{2}$ must
contain at least a critical point with odd multiplicity. We just proved that
the quadratic differential should have at least four critical points with odd
multiplicities; a contradiction. \begin{figure}[th]
\centering\includegraphics[height=1.5in,width=2in]{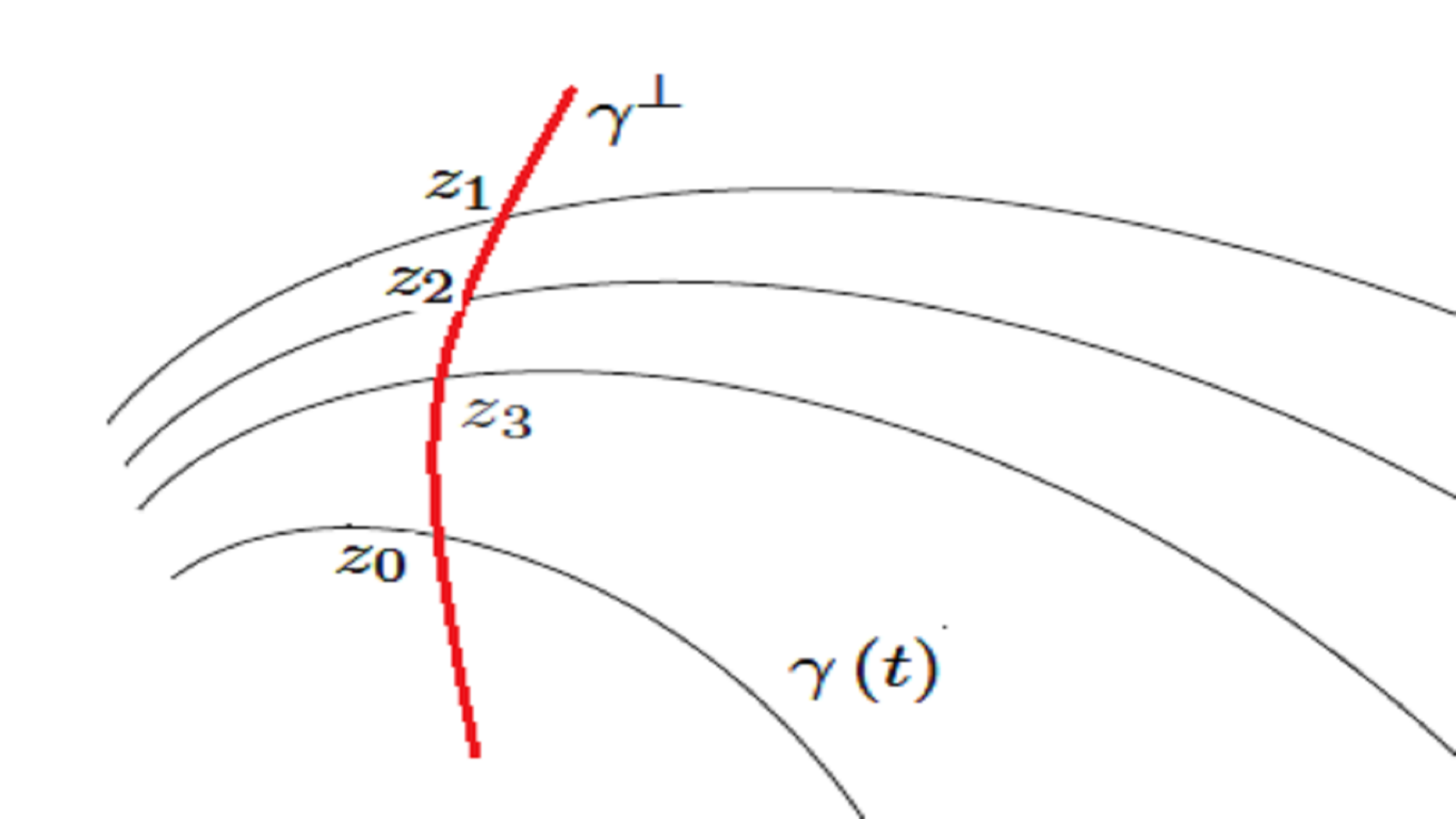} \hfill\caption{}%
\label{FIG2}%
\end{figure}
\end{proof}

\bigskip

\begin{example}
A particular case is when all the multiplicities of the critical points of a
quadratic differential are even. A lemniscate of a rational function $r\left(
z\right)  =\frac{p\left(  z\right)  }{q\left(  z\right)  },$ where $p$ and $q$
are two co-prime polynomials, is defined for $c>0$ by
\[
\left\{  z\in\widehat{%
\mathbb{C}
}:\left\vert r\left(  z\right)  \right\vert =c\right\}  .
\]
It is a real algebraic curve of degree $2\max(\deg p,\deg q);$ indeed, its
defining equation can be seen as
\[
p(x,y)\overline{p(x,y)}-c^{2}q(x,y)\overline{q(x,y)}=0,
\]
with $z=x+iy.$ For more details, see \cite{Sheil-Small}.These sets can be
determined immediately (equation (\ref{eq traj})) as trajectories of the
quadratic differential
\[
-\left(  \frac{r^{\prime}\left(  z\right)  }{r\left(  z\right)  }\right)
^{2}dz^{2}=-\left(  \frac{p^{\prime}\left(  z\right)  q\left(  z\right)
-p\left(  z\right)  q^{\prime}\left(  z\right)  }{p\left(  z\right)  q\left(
z\right)  }\right)  ^{2}dz^{2}.
\]
From the equality
\[
\frac{p^{\prime}\left(  z\right)  q\left(  z\right)  -p\left(  z\right)
q^{\prime}\left(  z\right)  }{p\left(  z\right)  q\left(  z\right)  }%
=\sum_{p\left(  a\right)  q\left(  a\right)  =0}\frac{m_{a}}{z-a},
\]
where $m_{a}\in%
\mathbb{N}
^{\ast}$ are the multiplicity of the zero's $a$ of $p\left(  z\right)
q\left(  z\right)  ,$ we deduce that the finite critical points are the zero's
of $\sum_{p\left(  a\right)  q\left(  a\right)  =0}\frac{m_{a}}{z-a};$ the
zero's of $p\left(  z\right)  q\left(  z\right)  $ (and possibly $\infty$) are
the only infinite critical points of $-\left(  \frac{r^{\prime}\left(
z\right)  }{r\left(  z\right)  }\right)  ^{2}dz^{2},$ and they are all double
poles with negative residues. By Proposition \ref{th princ}, this quadratic
differential has no recurrent trajectories. Its critical graph can be found
easily since the local and global structures of the trajectories are
well-known. The set of closed trajectories of $-\left(  \frac{r^{\prime
}\left(  z\right)  }{r\left(  z\right)  }\right)  ^{2}dz^{2}$ cover the whole
complex plane minus a zero Lebegue set, this kind of quadratic differential is
called \emph{Strebel differential}.
\end{example}

\bigskip A more elaborated exploitation of the proof of Proposition
\ref{th princ} allows us to show the following :

\begin{proposition}
\bigskip Let $\varphi\left(  z\right)  =\frac{p\left(  z\right)  }{\left(
q\left(  z\right)  \right)  ^{2}},$ where $p\left(  z\right)  $ and $q\left(
z\right)  $ are two co-prime polynomials. We suppose that :

\begin{enumerate}
\item[(a)] for each $i=1,...,n,$ the zero's $a_{2i-1}$ and $a_{2i}$ of
$p\left(  z\right)  $ are connected by a short trajectory of $\varpi_{\varphi
},$

\item[(b)] each pair of zero's $a_{2i-1}$ and $a_{2i}$ have the same parity of multiplicities,
\end{enumerate}

Then, the quadratic differential $\varpi_{\varphi}$ has no recurrent trajectory.
\end{proposition}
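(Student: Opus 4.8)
The plan is to adapt the strategy from the proof of Proposition \ref{th princ}, but now exploit the additional structure coming from the short trajectories connecting the zeros of $p$. Suppose, for contradiction, that $\varpi_{\varphi}$ has a recurrent trajectory. As in the previous proof, I would take a ray $\gamma(t)$, $t\in[0,1[$, of this recurrent trajectory, fix $t_0$ so that $\gamma(t)$, $t\geq t_0$, avoids critical points, and erect an orthogonal interval $\gamma^{\perp}$ through $z_0=\gamma(t_0)$. Since the trajectory is recurrent, it crosses $\gamma^{\perp}$ infinitely often. The key preliminary observation is that, under hypothesis (a), all the finite critical points (the zeros of $p$) are consumed by short trajectories and hence lie on the critical graph $\Gamma_{\varphi}$. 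The only infinite critical points are the double poles at the zeros of $q$, and near each such pole the trajectory behaviour is non-recurrent (closed curves or divergence, as recalled in the opening of Section 2). Thus the recurrent trajectory and its associated dense domain must live in the complement of these poles.

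Next I would reproduce the Teichm\"uller-polygon argument verbatim to its first conclusion: forming the Jordan curve $\delta$ from successive crossings $z_0,z_1,z_2,\dots$ of $\gamma^{\perp}$, and applying Lemma \ref{teich lemma} to the resulting $\varpi_{\varphi}$-polygons. As computed there, each polygon yields $1=2+\sum m_i$, forcing $\sum m_i=-1$, so each of the four sub-domains $\Omega_1,\Omega_2,\digamma_1,\digamma_2$ must contain at least one finite critical point of odd multiplicity. In the bare Proposition \ref{th princ} this already gave four odd-multiplicity critical points and hence a contradiction. Here the multiplicities need not be odd, so instead I would count with the grouping imposed by hypotheses (a) and (b): each zero $a_{2i-1}$ and its partner $a_{2i}$ are joined by a short trajectory and share the same parity of multiplicity. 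The crucial point is to argue that the Teichm\"uller identity really forces an \emph{odd} contribution in each sub-domain, which in turn requires at least one zero of odd multiplicity inside it; the parity condition (b) then pairs these up.

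The main work, and the step I expect to be the genuine obstacle, is to show that a short trajectory joining $a_{2i-1}$ to $a_{2i}$ cannot be split by the recurrent ray $\gamma$ in a way that places the two endpoints in different sub-domains without creating extra crossings. I would argue that because $\gamma^{\perp}$ and $\gamma$ are themselves horizontal/vertical arcs, a short (horizontal) trajectory meeting the boundary of a $\varpi_{\varphi}$-polygon must either be disjoint from it or run along it; a recurrent horizontal trajectory cannot transversally cross another horizontal trajectory. Consequently each connected short trajectory, together with its two endpoints, lies in the closure of a single sub-domain. Combined with (b), the presence of one odd-multiplicity zero in a sub-domain forces its partner into the \emph{same} sub-domain. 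Thus the four sub-domains cannot each absorb an isolated odd-multiplicity zero; the parity pairing either doubles the required count of critical points beyond what is available, or violates the fact that partnered zeros stay together. Tracking this bookkeeping carefully gives the contradiction.

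Finally I would conclude, exactly as before, that the hypothesised recurrent trajectory cannot exist, so $\varpi_{\varphi}$ has no recurrent trajectory. The delicate part throughout is purely the topological control of how the short trajectories interact with the Jordan curves built from $\gamma$ and $\gamma^{\perp}$; once that separation-of-endpoints claim is secured, the parity arithmetic from Lemma \ref{teich lemma} closes the argument essentially mechanically.
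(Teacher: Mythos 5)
The paper never actually writes down a proof of this proposition---it only remarks that it follows from ``a more elaborated exploitation'' of the proof of Proposition \ref{th princ}---so your proposal has to stand on its own. Its skeleton (Teichm\"uller polygons built from the recurrent ray, plus the pairing of odd-multiplicity zeros through short trajectories) is exactly the intended elaboration, but the two steps that carry all the weight are both defective as written. The first is your separation claim. You justify it by saying that a short trajectory ``meeting the boundary of a $\varpi_{\varphi}$-polygon must either be disjoint from it or run along it'' because horizontal trajectories cannot cross one another. That argument only controls the horizontal sides $\gamma\left(  \left[  t_{0},t_{k}\right]  \right)  $ of the polygons; it says nothing about the vertical side, and a horizontal arc certainly \emph{can} cross $\gamma^{\perp}$ transversally---the entire construction rests on the recurrent ray $\gamma$ doing precisely that, infinitely often. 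So as stated the claim is false, and a short trajectory could pass through $\gamma^{\perp}$ and place $a_{2i-1}$ and $a_{2i}$ in different sub-domains. The gap is reparable, but only by an ingredient you never invoke: the union $S=\cup_{i}\left(  \gamma_{i}\cup\left\{  a_{2i-1},a_{2i}\right\}  \right)  $ of the short trajectories is a compact set not containing the regular point $z_{0}$, so one must \emph{choose} $\gamma^{\perp}$ short enough at the outset to be disjoint from $S$; only then is each $\gamma_{i}\cup\left\{  a_{2i-1},a_{2i}\right\}  $ a connected set disjoint from every polygon boundary, hence trapped in a single sub-domain.

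The second problem is the final bookkeeping, which as described does not yield a contradiction. You say the pairing ``doubles the required count of critical points beyond what is available,'' but unlike Proposition \ref{th princ} there is no cardinality bound here: $p$ may have arbitrarily many zeros, so no count can ever be exceeded. The contradiction has to be a \emph{parity} contradiction, and it already works for a single polygon: inside any one sub-domain, every finite pole has even order (they are zeros of $q^{2}$), the possible pole at $\infty$ also has even order because hypothesis (b) forces $\deg p$ to be even, and the odd-multiplicity zeros of $p$ lying inside occur in complete pairs by (a), (b) and the (corrected) separation claim; hence $\sum_{i}m_{i}$ is even, contradicting the value $-1$ produced by Lemma \ref{teich lemma}. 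Note that you nowhere verify the evenness of the pole orders---in particular at $\infty$---yet this is exactly what lets you upgrade ``critical point of odd multiplicity'' in Proposition \ref{th princ} to ``zero of $p$ of odd multiplicity'' here, and it is also why the four-polygon structure $\Omega_{1},\Omega_{2},\digamma_{1},\digamma_{2}$ you carry over is unnecessary: once the parity statement is in place, one Teichm\"uller polygon suffices.
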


\bigskip

Here is a starting point which is an idea of Y.Baryshnikov and B.Shapiro, it
gives a necessary and sufficient conditions on the existence of recurrent
trajectories of the quadratic differential $\varpi_{\varphi}$ :

\begin{lemma}
[\cite{shabary}]\label{sha barysh}Assume that there exists a function $f:%
\mathbb{C}
\setminus\mathcal{I}\longrightarrow%
\mathbb{R}
$ (called \emph{level function} of $\varpi_{\varphi}$) such that :

\begin{enumerate}
\item[(i)] $f$ is continuous and piecewise smooth on $%
\mathbb{C}
\setminus\mathcal{I};$

\item[(ii)] $f$ is constant on the trajectories of $\varpi_{\varphi};$

\item[(iii)] $f$ is non-constant on any open subset of $%
\mathbb{C}
.$
\end{enumerate}

Then, the quadratic differential $\varpi_{\varphi}$ has no recurrent
trajectory. Conversely, if $\varpi_{\varphi}$ has no recurrent trajectory,
then such a function exists.
\end{lemma}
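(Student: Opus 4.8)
The plan is to treat the two implications separately. The forward implication is immediate from the structure theorem together with continuity, while the converse requires an explicit construction of $f$, which is where the genuine work lies.

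For the forward direction, suppose a level function $f$ obeying (i)--(iii) exists and, seeking a contradiction, that $\varpi_{\varphi}$ admits a recurrent trajectory $\gamma$. By Theorem \ref{jenkins th} the trajectory $\gamma$ sweeps a dense domain, so the interior of its closure $\overline{\gamma}$ contains a nonempty open set $U$, which I may shrink so as to be disjoint from $\mathcal{I}$ and from the finitely many finite critical points. Property (ii) gives $f\equiv c$ on the single trajectory $\gamma$ for some constant $c$, and continuity (i) then forces $f\equiv c$ on $\overline{\gamma}$, hence on $U$. This contradicts (iii), so no recurrent trajectory can exist.

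For the converse, assume $\varpi_{\varphi}$ has no recurrent trajectory. By Theorem \ref{jenkins th} the critical graph $\Gamma_{\varphi}$ is then a finite graph and $\widehat{\mathbb{C}}\setminus\Gamma_{\varphi}$ is a finite union of half-plane, strip, ring, and circle domains, with no dense domain present. On each such domain $D$ I fix a branch of $\sqrt{\varphi}$ and consider the natural parameter $\Phi_{D}(z)=\int^{z}\sqrt{\varphi(t)}\,dt$, which maps $D$ conformally onto a half-plane, a strip, or, after the evident real identification, a flat cylinder, sending the horizontal trajectories to the level sets of $\Im w$. I set $v_{D}:=\Im\Phi_{D}$; by the defining equation (\ref{eq traj}) this function is constant on each trajectory of $D$ and is a strictly monotone transverse coordinate, determined on $D$ up to a sign and an additive constant. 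In the ring and circle cases the relevant period of $\sqrt{\varphi}\,dz$ is real, so $v_{D}$ descends to a single-valued function on the cylinder.

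The absence of recurrent trajectories lets me record the global trajectory structure as a finite graph $G$: each domain $D$ becomes an edge carrying the coordinate $v_{D}$, and each maximal critical trajectory with its endpoints becomes a vertex at which the incident edges meet. I then build a continuous $F\colon G\to\mathbb{R}$ that is non-constant on every edge, by assigning real values at the vertices and reparametrising each edge monotonically in $v_{D}$ (using a tent-shaped profile on any edge whose two ends receive equal vertex values, so as to keep it non-constant). Pulling $F$ back through the quotient map that collapses each trajectory to its point of $G$, and assigning to each arc of $\Gamma_{\varphi}$ its common boundary value, yields a function $f$ on $\mathbb{C}\setminus\mathcal{I}$ that is constant on trajectories by construction (ii), smooth on each domain since $v_{D}$ is, and non-constant on every open set since any open set meets the interior of some domain on which $f$ varies (iii). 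The main obstacle is the careful passage from the trajectory structure to the finite graph $G$ together with the local normal forms at its vertices: one must show that near each finite critical point and along each critical trajectory the adjacent transverse coordinates $v_{D}$ glue so that $f$ is genuinely continuous and piecewise smooth there, and that the vertex-value assignment is globally consistent, which is exactly what the finiteness of the decomposition---guaranteed by the hypothesis of no recurrent trajectory---provides. Once these local models and the finiteness are in hand, the verification of (i)--(iii) is routine.
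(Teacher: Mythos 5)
Your proposal is correct and takes essentially the same route as the paper: the forward implication by noting $f$ is constant on a recurrent trajectory, hence by continuity on its closure, whose nonempty interior contradicts (iii); and the converse by decomposing $\widehat{\mathbb{C}}\setminus\Gamma_{\varphi}$ into the half-plane, strip, ring and circle domains of Theorem \ref{jenkins th}, equipping each with a transverse level coordinate built from $\Im\int^{z}\sqrt{\varphi(t)}\,dt$, and gluing along the critical graph. The only difference is bookkeeping: the paper sidesteps your vertex-consistency concern by normalizing each domain's function to vanish on the domain boundary (in effect your tent profile on every edge, with all vertex values set to zero), which makes the gluing automatically continuous.
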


\begin{proof}
If $\varpi_{\varphi}$ has no recurrent trajectory, $\Gamma_{\varphi}$ splits
$\widehat{%
\mathbb{C}
}$ into at most the first four domain configurations defined previously in the
introduction : half-plane domains, ring domains, circle domains, and strip
domains. On each of these domains we can construct a function that is
continuous, constant on the trajectories, but not on any open set, and which
is vanishing on the boundary of the domain. Gluing together these functions
along delivers the desired continuous function.

If $\varpi_{\varphi}$ has a recurrent trajectory $\gamma,$ by continuity, the
function $f$ will be constant on its closure, which violates (iii) of.
\end{proof}

This leads us to an interesting particular case when $\varphi\left(  z\right)
=\frac{p\left(  z\right)  }{\left(  q\left(  z\right)  \right)  ^{2}},$ where
$p\left(  z\right)  $ and $q\left(  z\right)  $ are two co-prime polynomials :

\begin{proposition}
\bigskip\label{main1}We suppose that :

\begin{enumerate}
\item[(a)] for each $i=1,...,n,$ the zero's $a_{2i-1}$ and $a_{2i}$ of
$p\left(  z\right)  $ are connected by a short trajectory of $\varpi_{\varphi
},$

\item[(b)] the residue of the rational function $\varphi\left(  z\right)  $ in
every zero of $q\left(  z\right)  $ is purely imaginary number.
\end{enumerate}

Then, the quadratic differential $\varpi_{\varphi}$ has no recurrent trajectory.
\end{proposition}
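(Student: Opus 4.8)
The plan is to exhibit the level function required by Lemma \ref{sha barysh} explicitly and then simply invoke that lemma. Guided by (\ref{eq traj}), the natural candidate is
\[
f(z)=\Im\int_{z_{0}}^{z}\sqrt{\varphi(t)}\,dt,
\]
with $z_{0}$ a fixed regular point and the path of integration avoiding the set $\mathcal{I}$ formed by the $n$ short trajectories joining the pairs $a_{2i-1},a_{2i}$ (together with the zeros of $q$). Two of the three required properties come for free: $f$ is constant on each horizontal trajectory by the very definition (\ref{eq traj}), which gives (ii); and $df=\Im(\sqrt{\varphi}\,dz)$ vanishes only on the discrete critical set, so $f$ is non-constant on every open subset, giving (iii). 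Everything therefore reduces to showing that $f$ is \emph{well defined} on $\mathbb{C}\setminus\mathcal{I}$, that is, single-valued and continuous.

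First I would use hypothesis (a) to fix a single-valued branch of $\sqrt{\varphi}$. The branch points of $\sqrt{\varphi}$ are exactly the odd-order zeros of $p$ (and the point $\infty$ when $\deg p$ is odd); the poles of $\varphi$ lie at the zeros of $q$ and have even order, hence contribute no branching. Condition (a) joins the zeros of $p$ in pairs by short trajectories, and cutting $\mathbb{C}$ along these arcs removes the $\pm$ ambiguity of $\sqrt{\varphi}$ on the complement. Moreover, a small loop encircling one cut $\gamma_{i}$ can be collapsed onto $\gamma_{i}$, and since $\gamma_{i}$ is a \emph{horizontal} trajectory, $\sqrt{\varphi}\,dz$ is real along it; hence the associated period is real, being zero when the endpoints are not branch points and equal to $2\int_{\gamma_{i}}\sqrt{\varphi}\,dz\in\mathbb{R}$ otherwise. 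Thus every period produced by the cut structure has vanishing imaginary part.

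Next I would use hypothesis (b) to handle the loops around the double poles. Expanding $\varphi=p/q^{2}$ at a zero $b$ of $q$ shows that $\sqrt{\varphi}$ has a simple pole there, and the period of $\sqrt{\varphi}\,dz$ around $b$ equals $2\pi i\,\rho_{b}$, where $\rho_{b}$ is the residue of $\sqrt{\varphi}$ at $b$; I would verify from this local Laurent expansion that condition (b) makes that period purely real. The only remaining generator of $H_{1}(\mathbb{C}\setminus\mathcal{I})$ is the loop around $\infty$, which is homologous to the product of all the finite loops; by the residue theorem on the sphere its period is the negative of the sum of the others and is therefore real as well, so no separate assumption at infinity is needed. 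With every period real, $f=\Im\int\sqrt{\varphi}$ is single-valued on $\mathbb{C}\setminus\mathcal{I}$; it is harmonic off the critical set and extends continuously up to the cuts, where it takes the constant value of the corresponding trajectory, which establishes (i). Lemma \ref{sha barysh} then rules out recurrent trajectories.

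The step I expect to be the real obstacle is the single-valuedness argument taken as a whole: one must check simultaneously that the cuts from (a) annihilate \emph{all} the monodromy of $\sqrt{\varphi}$ — including the branch point at $\infty$ when $\deg p$ is odd, and the parity matching of the paired zeros — so that a global branch genuinely exists, and that the cut-periods together with the pole-periods exhaust a basis of $H_{1}(\mathbb{C}\setminus\mathcal{I})$, leaving no loop with nonzero imaginary period. Within this, the precise mechanism by which the residue hypothesis (b) forces each double-pole period to be real is the computational heart of the argument; once the reality of every period is secured, the construction of $f$ and the appeal to Lemma \ref{sha barysh} are routine.
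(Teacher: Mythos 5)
Your proposal follows essentially the same route as the paper's own proof, which likewise takes the level function $f=\Im\int_{a_{1}}^{z}\sqrt{p\left(  s\right)  }/q\left(  s\right)  ds$, uses (a) to fix a single-valued branch of $\sqrt{p}$ off the short trajectories and to show that the periods around those cuts have zero imaginary part, writes the remaining periods as $\pm2\pi i$ times the residues at the zeros of $q$, obtains (iii) from harmonicity, and concludes by Lemma \ref{sha barysh}. The one delicate point — which the paper also glosses over, absorbing it into the notation $res_{f}\left(  a\right)  $ — is the step you call the computational heart: the pole-period $2\pi i\rho_{b}$ is real exactly when $\rho_{b}$, the residue of $\sqrt{\varphi}$ at $b$, is purely imaginary, and the Laurent expansion shows $\rho_{b}=\sqrt{c_{-2}}$ depends only on the leading coefficient $c_{-2}$ of $\varphi$ at $b$ (it is purely imaginary iff $c_{-2}<0$), not on the coefficient of $\left(  z-b\right)  ^{-1}$ of $\varphi$ itself, so hypothesis (b) must be understood in the quadratic-differential sense (residue of $\sqrt{\varphi}$) rather than literally, and under that reading your verification is immediate.
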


\begin{proof}
For $i=1,...,n,$ let $\gamma_{i}$ be the short trajectory (taken with an
orientation) of $\varpi_{\varphi}$ connecting $a_{2i-1}$ and $a_{2i}.$ We
define a single-valued function $\sqrt{p\left(  z\right)  }$ in $%
\mathbb{C}
\setminus\cup_{i=1}^{n}\gamma_{i}.$ \bigskip For $t\in\cup_{i=1}^{n}\gamma
_{i},$ we denote by $\left(  \sqrt{p\left(  t\right)  }\right)  _{+}$ and
$\left(  \sqrt{p\left(  t\right)  }\right)  _{-}$the limits of $\sqrt{p\left(
z\right)  }$ from the $+$ and $-$ sides, respectively. (As usual, the $+$-side
of an oriented curve lies to the left and the $-$-side lies to the right, if
one traverses the curve according to its orientation). We have then%
\[
\frac{\left(  \sqrt{p\left(  t\right)  }\right)  _{+}}{q\left(  t\right)
}=-\frac{\left(  \sqrt{p\left(  t\right)  }\right)  _{-}}{q\left(  t\right)
},t\in\gamma_{i},i=1,...,n.
\]
Since $\gamma_{i}$ is a short trajectory of $\varpi_{\varphi},$
\[
\Im\left(  \int_{a_{2i-1}}^{t}\frac{\left(  \sqrt{p\left(  s\right)  }\right)
_{+}}{q\left(  s\right)  }ds\right)  =0,t\in\gamma_{i},i=1,...,n.
\]
It follows that, if $\gamma$ is a closed Jordan curve encircling $\gamma_{i}$
(or parts of it ) and none of the poles of $\varphi,$ then
\[
\Im\int_{\gamma}\frac{\sqrt{p\left(  z\right)  }}{q\left(  z\right)  }%
dz=\pm2\Im\int_{\gamma_{i}}\frac{\left(  \sqrt{p\left(  z\right)  }\right)
_{+}}{q\left(  z\right)  }dz=0.
\]
We consider the multi-valued function
\[
f:z\longmapsto\int_{a_{1}}^{z}\frac{\sqrt{p\left(  s\right)  }}{q\left(
s\right)  }ds
\]
defined in $%
\mathbb{C}
\setminus\mathcal{I}.$ For $z\in%
\mathbb{C}
\setminus\left(  \left(  \cup_{i=1}^{n}\gamma_{i}\right)  \cup\mathcal{I}%
\right)  ,$ let $\beta_{1}$ and $\beta_{2}$ be two oriented Jordan arcs
connecting $a_{1}$ and $z$ in $%
\mathbb{C}
\setminus\left(  \left(  \cup_{i=1}^{n}\gamma_{i}\right)  \cup\mathcal{I}%
\right)  .$ If we denote by $\mathcal{J}$ the subset of poles of $\varphi$
encircled by the Jordan closed curve $\beta_{1}-\beta_{2},$ then we have
\[
\int_{\beta_{1}-\beta_{2}}\frac{\sqrt{p\left(  s\right)  }}{q\left(  s\right)
}ds=\pm2\pi i\sum_{a\in\mathcal{J}}res_{f}\left(  a\right)  .
\]
By condition (a), we conclude that
\[
\Im\int_{\beta_{1}}\frac{\sqrt{p\left(  s\right)  }}{q\left(  s\right)
}ds=\Im\int_{\beta_{2}}\frac{\sqrt{p\left(  s\right)  }}{q\left(  s\right)
}ds,
\]
and the function $\Im f$ is well defined in $%
\mathbb{C}
\setminus\left\{  \text{poles of }\varphi\right\}  .$ Obviously, it satisfies
(i) and (ii) of Lemma \ref{sha barysh}; the point (iii) follows from the
harmonicity of $\Im f$ in $%
\mathbb{C}
\setminus\left(  \left(  \cup_{i=1}^{n}\gamma_{i}\right)  \cup\left\{
\text{poles of }\varphi\right\}  \right)  .$
\end{proof}

As a consequence, if $p$ and $q$ have simple and real zero's respectively
$a_{1}<\cdot\cdot\cdot<a_{2n}$ and $b_{1}<\cdot\cdot\cdot<b_{m},$ such that
$p\left(  b_{i}\right)  <0$ for $i=1,...,m.$ Then the quadratic differential
$-\frac{p\left(  z\right)  }{\left(  q\left(  z\right)  \right)  ^{2}}dz^{2}$
has no recurrent trajectory. Indeed, for any $i=1,...,n,$ the segment $\left[
a_{2i-1},a_{2i}\right]  $ is a short trajectory of the quadratic differential
$-\frac{p\left(  z\right)  }{\left(  q\left(  z\right)  \right)  ^{2}}dz^{2},$
and the result follows from Proposition \ref{main1}.

\begin{example}
Let $p\left(  z\right)  ,q\left(  z\right)  ,$ and $r\left(  z\right)  $ be
three complex polynomials, $p$ and $q$ are co-prime. If the algebraic
equation
\[
p\left(  z\right)  \mathcal{C}^{2}\left(  z\right)  +q\left(  z\right)
\mathcal{C}\left(  z\right)  +r\left(  z\right)  =0
\]
admits solution as Cauchy transform of some compactly supported Borel signed
measure $\mu$ in the $%
\mathbb{C}
$-plane, then, by Plemelj-Sokhotsky Lemma, this measure lives in the short
trajectories of the quadratic differential
\[
-\frac{q^{2}\left(  z\right)  -4p\left(  z\right)  r\left(  z\right)  }%
{p^{2}\left(  z\right)  }dz^{2},
\]
and it is given by
\[
d\mu\left(  z\right)  =\frac{1}{2\pi i}\frac{\sqrt{q^{2}\left(  z\right)
-4p\left(  z\right)  r\left(  z\right)  }}{p\left(  z\right)  }dz,
\]
see \cite{pristker} or \cite{Shapiro}. The information by Proposition
\ref{main1} that the above quadratic differential cannot have a recurrent
trajectory is essential in the finding of short trajectories if exist.
\end{example}

\begin{acknowledgement}
\bigskip\ This work was carried out during a visit in July 2017 to the Q.G.M
centre, Aahr\"{u}s University, Denmark. The author wants to thank Professor
J\o rgen Ellegaard Anderson for the invitation, and the whole staff of the
centre for their kindness and hospitality.
\end{acknowledgement}


\begin{verbatim}
Higher institute of applied sciences and technology of Gabes,
Avenue Omar Ibn El Khattab 6029. Gabes. Tunisia.
E-mail address: faouzithabet@yahoo.fr
\end{verbatim}

\end{document}